\renewcommand{\@seccntformat}[1]{\bf\csname the#1\endcsname.}
\renewcommand{\section}{\@startsection{section}{1}
	\z@{.7\linespacing\@plus\linespacing}{.5\linespacing}
	{\normalfont\upshape\bfseries\centering}}
\renewcommand{\@biblabel}[1]{\@ifnotempty{#1}{#1.}}
\theoremstyle{plain}
\newtheorem{thm}{Theorem}[section]
\newtheorem{prop}[thm]{Proposition}
\newtheorem{cor}[thm]{Corollary}
\theoremstyle{definition}
\newtheorem{defn}[thm]{Definition}
\def\A{{\mathcal A}}
\def \>{\succ}
\def \<{\prec}
\def\A{{\mathbb A}}
\begin{document}	
\title[Basdouri Imed\textsuperscript{1}, Jean Lerbet\textsuperscript{2}, Bouzid Mosbahi\textsuperscript{3}]{CENTRAL DERIVATIONS OF LOW-DIMENSIONAL ZINBIEL ALGEBRAS}
	\author{Basdouri Imed\textsuperscript{1}, Jean Lerbet\textsuperscript{2}, Bouzid Mosbahi\textsuperscript{3}}
 \address{\textsuperscript{1}Department of Mathematics, Faculty of Sciences, University of Gafsa, Gafsa, Tunisia}
 \address{\textsuperscript{2}Laboratoire de Mathématiques et Modélisation d’Évry (UMR 8071) Université d’Évry Val d’Essonne I.B.G.B.I., 23 Bd. de France, 91037 Évry Cedex, France}
\address{\textsuperscript{3}Department of Mathematics, Faculty of Sciences, University of Sfax, Sfax, Tunisia}
 \email{\textsuperscript{1}basdourimed@yahoo.fr}
 \email{\textsuperscript{2}jean.lerbet@ufrst.univ-evry.fr}
\email{\textsuperscript{3}mosbahi.bouzid.etud@fss.usf.tn}

	\keywords{Central Derivation, Zinbiel Algebra, Dendriform algebras, Pre-Lie algebras, Centroid.}
	\subjclass[2020]{16D70,17A30, 17A32.}

	\date{\today}

	\begin{abstract}
The study of central derivations in low-dimensional algebraic structures is a crucial area of research in mathematics, with applications in understanding the internal symmetries and deformations of these structures. In this article, we investigate the central derivations of complex Zinbiel algebras of dimension $\leq 4$. Key properties of the central derivation algebras are presented, including their structures and dimensions. The results are summarized in a tabular format, providing a clear classification of decomposable and indecomposable centroids based on these derivations. Specifically, we show that the centroid of two-dimensional Zinbiel algebras is indecomposable, while in three-dimensional Zinbiel algebras, centroids such as $\A_3^3$, $\A_4^3$, $\A_6^3$, and $\A_7^3$ are decomposable. For four-dimensional Zinbiel algebras, centroids including $\A_1^4$, $\A_3^4$, $\A_5^4$, $\A_9^4$, $\A_{10}^4$, $\A_{11}^4$, and $\A_{16}^4$ are decomposable. Furthermore, the dimensions of central derivation algebras vary across different dimensions: two-dimensional Zinbiel algebras have central derivation dimensions of one, while in three-dimensional and four-dimensional cases, these dimensions range from zero to nine.
\end{abstract}

\maketitle \section{ Introduction}\label{introduction}
Non-associative algebras have become a central object of study in modern algebraic theory, particularly through their connections to Lie algebras and their various generalizations, such as Malcev algebras, Lie superalgebras, binary Lie algebras, Leibniz algebras, and others. Among these, Leibniz algebras have gained significant attention for their non-associative yet structured properties, defined by the identity
\begin{align*}
[a, [b, c]] &= [[a, b], c] - [[a, c], b]
\end{align*}
The study of these algebras has led to the exploration of their dual structures, notably Zinbiel algebras, which are considered the Koszul duals of Leibniz algebras in the framework of J.L. Loday's theory \cite{1}. In \cite{2}, Zinbiel algebras were introduced as dual Leibniz algebras, with further details provided in works such as \cite{2,3}.

A Zinbiel algebra consists of a vector space  $\A$ with a bilinear map $\bullet: \A \times \A \to \A$ satisfying the identity
\begin{align*}
(a \bullet b) \bullet c &= a \bullet (b \bullet c) + a \bullet (c \bullet b),
\end{align*}
This structure allows for the formulation of commutative associative algebraic structures through the new product  $a \bullet b + b \bullet a$, making Zinbiel algebras closely related to dendriform algebras and pre-Lie algebras see \cite{4,5}. In fact, dendriform algebras, defined by two bilinear operations $\prec$  and $\succ$  on a vector space $\A$, yield associative structures when combined appropriately. Specifically, a Zinbiel algebra can be viewed as a particular case of a dendriform algebra with a specific relationship between the operations $\prec$ and  $\succ$. Pre-Lie algebras, which share a similar structure, relate to Lie algebras through a commutative product. These algebraic objects can be visualized through the following diagram, reflecting their interconnections:

\begin{center}
    $\xymatrix{
        & Zinbiel \ar[r] \ar[d] & Dendriform \ar[r] \ar[d] & Pre-Lie \ar[d] \\
        & Commutative \ar[r] & Associative \ar[r] & Lie
    }$
\end{center}

Furthermore, Zinbiel algebras hold important relationships with pre-Poisson algebras and other structures, showing their versatility and importance in the wider context of algebra theory. Although significant progress has been made in understanding Zinbiel algebras, particularly their connections to dendriform and pre-Lie algebras, many challenges remain due to their inherent non-associativity.

This paper focuses on central derivations of low-dimensional Zinbiel algebras, specifically those of dimensions two, three, and four. Building on existing classifications of complex Zinbiel algebras see \cite{6,7,8,9,10}, we aim to systematically compute central derivations, classify centroids for decomposability, and provide illustrative examples. To facilitate our computations, we employ symbolic algebra software, including Maple, ensuring rigorous and efficient analysis.
The paper is organized as follows: Section 1 provides an overview of Zinbiel algebras and summarizes key results. Section 2 introduces the definitions and preliminary concepts needed for the study. Section 3 presents an algorithm for finding central derivations in Zinbiel algebras. Finally, using classifications of low-dimensional complex Zinbiel algebras from earlier work, we compute central derivations, examine centroids for decomposability, and provide examples generated using Maple software.

\section{ Prelimieries}
\begin{defn}
An algebra  $\A$ over a field $\mathbb{K}$ is a vector space over $\mathbb{K}$ equipped with a bilinear map
\begin{align*}
\pi: \A \times \A \to \A,
\end{align*}
such that
\begin{align*}
\pi(\alpha a + \beta b, c) &= \alpha \pi(a, c) + \beta \pi(b, c),\\
\pi(c, \alpha a + \beta b) &= \alpha \pi(c, a) + \beta \pi(c, b),
\end{align*}
where  $a, b, c \in \A$  and  $\alpha, \beta \in \mathbb{K}$.
\end{defn}

\begin{defn}
Let $\A$ be a vector space over $\mathbb{K}$ with a bilinear product $\cdot : \A \times \A \to \A$. Then $(\A, \cdot)$ is called a \textit{pre-Lie algebra} if for any $a, b, c \in \A$,
\begin{align*}
(a \cdot b) \cdot c - a \cdot (b \cdot c) &= (b \cdot a) \cdot c - b \cdot (a \cdot c).
\end{align*}
\end{defn}

\begin{defn}
A dendriform algebra $\A$  over a field $\mathbb{K}$  is a  $\mathbb{K}$-vector space equipped with two binary operations
\begin{align*}
\succ: \A \otimes \A \rightarrow \A, \quad \prec: \A \otimes \A \rightarrow \A,
\end{align*}
satisfying the following identities for all $a, b, c \in \A$:
\begin{align*}
    (a \prec b) \prec c &= a \prec (b \prec c) + a \prec (b \succ c),\\
    (a \succ b) \prec c &= a \succ (b \prec c), \\
    a \succ (b \succ c)&=(a \prec b) \succ c + (a \succ b) \succ c.
\end{align*}
\end{defn}

\begin{defn}
A Zinbiel algebra $\A$ over a field $\mathbb{K}$ is an algebra equipped with a bilinear operation  $\bullet : \A \times \A \to \A$ satisfying the Zinbiel identity:
\begin{align*}
(a \bullet b) \bullet c &= a \bullet (b \bullet c) + a \bullet (c \bullet b), \quad \forall \; a, b, c \in \A.
\end{align*}
\end{defn}

\begin{defn}
A derivation of a Zinbiel algebra  $\A$  is a linear map  $D: \A \to \A$  satisfying:
\begin{align*}
D(a \bullet b) &= D(a) \bullet b + a \bullet D(b), \quad \forall \; a, b \in \A.
\end{align*}
The set of all derivations of $\A$ is denoted by $Der(\A)$.
\end{defn}

\begin{defn}
For a Zinbiel algebra  $\A$, define a descending sequence of subspaces as:
\begin{align*}
\A^1 &= \A, \quad \A^{k+1} = \A^k \bullet \A, \quad k \geq 1.
\end{align*}
It follows that:
\begin{align*}
\A^1 \supseteq \A^2 \supseteq \A^3 \supseteq \cdots.
\end{align*}
\end{defn}

\begin{defn}
Let $\A$ and $\A_1$ be two Zinbiel algebras over a field $\mathbb{K}$. A mapping $\psi : \A \to \A_1$ is called a \textit{homomorphism} if it satisfies
\begin{align*}
\psi(a \bullet b) &= \psi(a) \bullet \psi(b), \quad \text{for all } a, b \in \A.
\end{align*}
We call $\psi$ an \textit{isomorphism} if it is bijective and an \textit{endomorphism} if it is a linear map $\psi : \A \to \A$. The set of all endomorphisms of $\A$ is denoted by $End(\A)$.
\end{defn}

\begin{defn}
Let $\A$ be a Zinbiel algebra over a field $\mathbb{K}$. The set
\begin{align*}
\Gamma(\A) &= \{ \phi \in End(\A) \mid \phi(a \bullet b) = \phi(a) \bullet b = a \bullet(b), \quad \text{for all } a, b \in \A \}
\end{align*}
is called the \textit{centroid} of $\A$.
\end{defn}

\begin{defn}
Let  $\A_1$ be a nonempty subset of a Zinbiel algebra  $\A$. The subset
\begin{align*}
C_\A(\A_1) &= \{ a \in \A \mid a \bullet \A_1 = \A_1 \bullet a = 0 \}
\end{align*}
is called the \textit{centralizer} of $\A_1$ in  $\A$. In particular, $C_\A(\A) = C(\A)$ is the \textit{center} of $\A$. Additionally, an ideal $I$ of a Zinbiel algebra $\A$ satisfies  $\A \bullet I \subseteq I$  and  $I \bullet \A \subseteq I$.
\end{defn}

\begin{defn}
A Zinbiel algebra $\A$ is called \textit{indecomposable} if it cannot be written as a direct sum of its ideals. Otherwise, $\A$ is called \textit{decomposable}.
\end{defn}

\begin{defn}
Let $\A$ be a Zinbiel algebra and $\phi \in End(\A)$. The map $\phi$ is called a \textit{central derivation} if  $\phi(\A) \subseteq C(\A)$ and  $\phi(\A \bullet \A) = 0$.
\end{defn}

The set of all central derivations of a Zinbiel algebra $\A$, denoted by $C_D(\A)$, forms an associative algebra with respect to the composition operation  $\circ$, and it is a Lie algebra with respect to the bracket defined by
\begin{align*}
[a_1, a_2] &= a_1 \circ a_2 - a_2 \circ a_1, \quad \text{for all } a_1, a_2 \in C_D(\A).
\end{align*}

\begin{thm}
Let $\xi : \A_1 \to \A_2$  be an isomorphism of Zinbiel algebras  $(\A_1, \star)$ and  $(\A_2, \ast)$ over a field $\mathbb{K}$. The mapping $\phi : End(\A_1) \to End(\A_2)$ defined by $\phi(c) = \xi \circ c \circ \xi^{-1}$ is an isomorphism of $CD(\A_1)$ and $CD(\A_2)$, that is,
\begin{align*}
\phi(CD(\A_1)) &= CD(\A_2).
\end{align*}
\end{thm}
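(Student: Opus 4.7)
The plan is to show that conjugation by $\xi$ transports the two defining properties of a central derivation, namely landing in the center and vanishing on products, from $\A_1$ to $\A_2$, and then to observe that this conjugation automatically preserves composition and hence the Lie bracket. Throughout I would write $\phi(c) = \xi \circ c \circ \xi^{-1}$ and note at the outset that $\phi$ is a linear bijection of $\End(\A_1)$ onto $\End(\A_2)$, with inverse $d \mapsto \xi^{-1}\circ d\circ \xi$.

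The two facts I would isolate first, as immediate consequences of $\xi$ being a Zinbiel isomorphism, are:
\begin{align*}
\xi\bigl(C(\A_1)\bigr) &= C(\A_2),\\
\xi\bigl(\A_1\star \A_1\bigr) &= \A_2 \ast \A_2.
\end{align*}
The first follows because $\xi(z)\ast\xi(a)=\xi(z\star a)=0$ for every $z\in C(\A_1)$ and every $a\in\A_1$ (and symmetrically on the other side), together with the analogous argument applied to $\xi^{-1}$. The second is immediate from $\xi(a\star b)=\xi(a)\ast\xi(b)$ and the surjectivity of $\xi$.

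Next I would verify that $\phi$ sends $CD(\A_1)$ into $CD(\A_2)$. Fix $c\in CD(\A_1)$ and $y\in\A_2$; writing $y=\xi(x)$ with $x\in\A_1$ gives $\phi(c)(y)=\xi(c(x))$, and since $c(x)\in C(\A_1)$ we get $\phi(c)(y)\in\xi(C(\A_1))=C(\A_2)$. For the vanishing condition, any element of $\A_2\ast\A_2$ is of the form $\xi(u)$ with $u\in\A_1\star\A_1$, so $\phi(c)\bigl(\xi(u)\bigr)=\xi(c(u))=\xi(0)=0$. The same argument applied to $\xi^{-1}$ shows that $d\mapsto \xi^{-1}\circ d\circ \xi$ sends $CD(\A_2)$ into $CD(\A_1)$, giving the set-level equality $\phi(CD(\A_1))=CD(\A_2)$.

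Finally, to upgrade this bijection to an isomorphism of Lie algebras (and of associative algebras under composition), I would simply compute
\begin{align*}
\phi(c_1\circ c_2) = \xi\circ c_1\circ c_2\circ \xi^{-1} = (\xi\circ c_1\circ\xi^{-1})\circ(\xi\circ c_2\circ\xi^{-1}) = \phi(c_1)\circ\phi(c_2),
\end{align*}
from which $\phi([c_1,c_2])=[\phi(c_1),\phi(c_2)]$ follows at once. There is no real obstacle here; the only step that requires any care is the preservation of the center under $\xi$, and this is really a one-line check from the definition of a Zinbiel isomorphism. Everything else is bookkeeping.
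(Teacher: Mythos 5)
Your proof is correct, and it is both more complete and more faithful to the paper's own definition of a central derivation than the argument the paper gives. The paper's proof proceeds by transporting the Leibniz identity $c(\xi^{-1}(a_1)) \star \xi^{-1}(a_2) + \xi^{-1}(a_1) \star c(\xi^{-1}(a_2)) = c(\xi^{-1}(a_1) \star \xi^{-1}(a_2))$ through $\xi$, i.e.\ it really only checks that conjugation preserves the derivation property, and then asserts membership in $CD(\A_2)$ and the set equality without verifying the two conditions $\phi(c)(\A_2) \subseteq C(\A_2)$ and $\phi(c)(\A_2 \ast \A_2) = 0$ that actually define a central derivation, and without addressing the reverse inclusion. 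You instead isolate the two transport facts $\xi(C(\A_1)) = C(\A_2)$ and $\xi(\A_1 \star \A_1) = \A_2 \ast \A_2$ and deduce both defining conditions directly, then get the opposite inclusion by applying the same argument to $\xi^{-1}$; this closes the gaps in the published argument. Your final computation $\phi(c_1 \circ c_2) = \phi(c_1) \circ \phi(c_2)$ is also a worthwhile addition, since the theorem claims an \emph{isomorphism} of the central derivation algebras and not merely a bijection of sets, a point the paper leaves implicit. The only minor caveat is that your center-preservation step tacitly uses that $\xi$ is surjective when passing from "$\xi(z) \ast \xi(a) = 0$ for all $a \in \A_1$" to "$\xi(z) \ast y = 0$ for all $y \in \A_2$"; you should state that explicitly, but it is immediate since $\xi$ is an isomorphism.
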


\begin{proof}
Due to the isomorphism relation, we have
\begin{align*}
a \ast b &= \xi(\xi^{-1}(a) \star \xi^{-1}(b)).
\end{align*}
Assume  $c \in End(\A_1$)  such that
\begin{align*}
c(\xi^{-1}(a_1)) \star \xi^{-1}(a_2) + \xi^{-1}(a_1) \star c(\xi^{-1}(a_2)) &= c(\xi^{-1}(a_1) \star \xi^{-1}(a_2)).
\end{align*}
Applying the mapping $\xi$ on this equation, we have
\begin{align*}
\xi \circ c \circ \xi^{-1}(a_1 \ast a_2) &= \xi \circ c \circ \xi^{-1}(a_1) \ast a_2 + a_1 \ast \xi \circ c \circ \xi^{-1}(a_2),
\end{align*}
that is,  $\xi \circ c \circ \xi^{-1} \in CD(\A_2)$. Thus,
\begin{align*}
\phi(\mathrm{CD}(\A_1)) &= \mathrm{CD}(\A_2).
\end{align*}
\end{proof}

\begin{prop}
Let $\A$ be a Zinbiel algebra. Then
\begin{itemize}
    \item[(i)] $CD(\A) \subseteq \mathrm{Der}(\A)$;
    \item[(ii)] $[ \Gamma(\A), \Gamma(\A) ] \subseteq CD(\A)$;
    \item[(iii)] $(\phi \circ D)(a \bullet b) = (\phi \circ D)(a) \bullet b + a \bullet (\phi \circ D)(b)$;
    \item[(iv)] $[D, \phi](a \bullet b) = [D, \phi](a) \bullet b + a \bullet [d, \phi](b)$.
\end{itemize}
\end{prop}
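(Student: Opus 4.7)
The plan is to treat each of the four items as a direct unpacking of the defining axioms. For the centroid I will use the relation $\phi(a\bullet b)=\phi(a)\bullet b=a\bullet\phi(b)$ (reading the typo in Definition of $\Gamma(\A)$ in the natural way); for a derivation, $D(a\bullet b)=D(a)\bullet b+a\bullet D(b)$; and for a central derivation, $\phi(\A)\subseteq C(\A)$ together with $\phi(\A\bullet\A)=0$.

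For (i), if $\phi\in CD(\A)$ then $\phi(a\bullet b)=0$ because $a\bullet b\in\A\bullet\A$, while $\phi(a)\bullet b=a\bullet\phi(b)=0$ because $\phi(a),\phi(b)\in C(\A)$. Both sides of the Leibniz identity vanish, so $\phi$ is trivially a derivation.

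For (ii), let $\phi,\psi\in\Gamma(\A)$ and set $\xi=\phi\psi-\psi\phi$. To establish $\xi(\A\bullet\A)=0$, I will reduce $\phi\psi(a\bullet b)$ by first applying the left form of the centroid to $\psi$ and then the right form to $\phi$, obtaining $\psi(a)\bullet\phi(b)$; symmetrically, $\psi\phi(a\bullet b)$ reduces to the same expression by first applying the right form to $\phi$ and then the left form to $\psi$, so the difference vanishes. For $\xi(\A)\subseteq C(\A)$, I will pull the composition across the product by using the centroid twice in the same direction: $\phi\psi(a)\bullet c=\phi\psi(a\bullet c)$ and $c\bullet\phi\psi(a)=\phi\psi(c\bullet a)$, with analogous equalities for $\psi\phi$; subtracting and invoking $\xi(\A\bullet\A)=0$ yields $\xi(a)\bullet c=0=c\bullet\xi(a)$.

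Parts (iii) and (iv) are then direct calculations based on the same principle. For (iii), I expand $(\phi\circ D)(a\bullet b)=\phi(D(a)\bullet b)+\phi(a\bullet D(b))$ and apply the left form of the centroid to the first summand and the right form to the second; the stated identity drops out. For (iv), I will expand $D\phi(a\bullet b)$ with the centroid and the Leibniz rule, expand $\phi D(a\bullet b)$ analogously, and then use the auxiliary identity $\phi(a)\bullet D(b)=a\bullet\phi D(b)$ (another direct consequence of the two-sided centroid relation) to cancel the cross-terms that are not of the form $[D,\phi](a)\bullet b$ or $a\bullet[D,\phi](b)$.

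The main obstacle is bookkeeping in (ii): the clean cancellation $\phi\psi(a\bullet b)=\psi\phi(a\bullet b)$ requires alternating the two forms of the centroid identity at each step, and picking the wrong form leaves a residual term that does not obviously vanish. Parts (iii) and (iv) become routine once this two-sided flexibility of the centroid relation is systematically exploited.
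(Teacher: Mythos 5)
Your treatments of (i), (ii) and (iii) are correct. For (i) you argue exactly as the paper does: all three quantities $\phi(a\bullet b)$, $\phi(a)\bullet b$, $a\bullet\phi(b)$ vanish, so the Leibniz identity holds trivially. For (ii) and (iii), which the paper dismisses with ``can be obtained by definitions,'' your alternating use of the two forms of the centroid identity is the right mechanism and the computations close: $\phi\psi(a\bullet b)=\psi(a)\bullet\phi(b)=\psi\phi(a\bullet b)$, and $[\phi,\psi](a)\bullet c=[\phi,\psi](a\bullet c)=0$, so the commutator lands in $CD(\A)$; likewise $(\phi\circ D)$ is a derivation by splitting the two summands with the left and right centroid relations respectively.

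Part (iv) contains a genuine gap, and it is not one you can repair as written: the cancellation you describe does not produce the claimed identity. Expanding $D\phi(a\bullet b)=D\phi(a)\bullet b+\phi(a)\bullet D(b)$ and $\phi D(a\bullet b)=\phi D(a)\bullet b+a\bullet\phi D(b)$ and cancelling the cross-terms via your auxiliary identity $\phi(a)\bullet D(b)=a\bullet\phi D(b)$ leaves $[D,\phi](a\bullet b)=[D,\phi](a)\bullet b$; the symmetric choice of expansions gives $[D,\phi](a\bullet b)=a\bullet[D,\phi](b)$. What these computations actually prove is $[D,\phi]\in\Gamma(\A)$. The identity asserted in (iv) is the \emph{sum} of the two right-hand sides, so it would force $[D,\phi](a\bullet b)=2\,[D,\phi](a\bullet b)$, i.e.\ $[D,\phi](\A\bullet\A)=0$, and that fails in general. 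Concretely, let $\A=\langle e_1,e_2,e_3,e_4\rangle$ with $e_1\bullet e_1=e_3$, $e_1\bullet e_2=e_2\bullet e_1=e_4$ and all other products zero (this is Zinbiel, since every triple product vanishes). Take $\phi\in\Gamma(\A)$ with $\phi(e_1)=e_2$, $\phi(e_3)=e_4$, $\phi(e_2)=\phi(e_4)=0$, and $D\in\Der(\A)$ with $D(e_1)=e_1$, $D(e_2)=0$, $D(e_3)=2e_3$, $D(e_4)=e_4$. Then $[D,\phi](e_1)=-e_2$ and $[D,\phi](e_1\bullet e_1)=[D,\phi](e_3)=-e_4$, whereas $[D,\phi](e_1)\bullet e_1+e_1\bullet[D,\phi](e_1)=-2e_4$. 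So (iv) either needs the additional hypothesis $[D,\phi](\A\bullet\A)=0$, or should be restated with the ``$+$'' replaced by ``$=$'' (membership of $[D,\phi]$ in the centroid), which is what your argument in fact establishes.
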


\begin{proof}
We prove $(i)$, and the others can be obtained by definitions.
Let $\A$ be a Zinbiel algebra, $\phi_1 \in CD(\A)$, and for all $a, b \in \A$. We need to show that
\begin{align*}
\phi_1(a \bullet b) &= \phi_1(a) \bullet b = a \bullet \phi_1(b) = 0.
\end{align*}
Thus,
\begin{align}
\phi_1(a \bullet b) &= \phi_1(a) \bullet b = 0 \label{ed1}\\
\phi_1(a \bullet b) &= a \bullet \phi_1(b) = 0.\label{eq2}
\end{align}
If we add (\ref{ed1}) with (\ref{eq2}), we get
\begin{align*}
\phi_1(a \bullet b) + \phi_1(a \bullet b) &= \phi_1(a) \bullet b + a \bullet \phi_1(b).
\end{align*}
Therefore,
\begin{align*}
\phi_1(a \bullet b) &= \phi_1(a) \bullet b + a \bullet \phi_1(b) \subseteq Der(\A),
\end{align*}
since $\phi_1(a \bullet b) = 0$.
\end{proof}

\begin{thm}\label{t2}
Let $\A$ be a Zinbiel algebra. Then for any  $\phi \in \Gamma(\A)$  and  $d \in Der(\A)$, the following statements hold:
\begin{enumerate}
    \item $Der(\A) \cap \Gamma(\A) = CD(\A)$,
    \item  $d \circ \phi \in \Gamma(\A)$ if and only if  $\phi \circ d$  is a central derivation of  $\A$,
    \item \( d \circ \phi \in Der(\A) \) if and only if  $[d, \phi]$ is a central derivation of  $\A$.
\end{enumerate}
\end{thm}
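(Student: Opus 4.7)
The plan is to prove the three items in order, using (1) as a reduction device for (2) and (3). The strategy relies on two preliminary computations that I will carry out first: that $\phi\circ d$ is automatically a derivation whenever $\phi\in\Gamma(\A)$ and $d\in Der(\A)$, and that $[d,\phi]$ automatically lies in $\Gamma(\A)$ under the same hypotheses. Both follow from short direct calculations combining the centroid rule $\phi(x\bullet y)=\phi(x)\bullet y=x\bullet\phi(y)$ with the Leibniz rule; in the second case the mixed $\phi(a)\bullet d(b)$ contributions cancel out of the commutator, leaving $[d,\phi](a)\bullet b$ on one side and $a\bullet[d,\phi](b)$ on the other.

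For (1) I would unwind the definitions directly. If $\phi\in CD(\A)$, then $\phi(\A)\subseteq C(\A)$ forces $\phi(a)\bullet b=0=a\bullet\phi(b)$, while $\phi(\A\bullet\A)=0$ gives $\phi(a\bullet b)=0$; together these yield $\phi\in\Gamma(\A)$, and $\phi\in Der(\A)$ is item~(i) of the preceding Proposition. Conversely, if $\phi\in Der(\A)\cap\Gamma(\A)$, comparing $\phi(a\bullet b)=\phi(a)\bullet b+a\bullet\phi(b)$ with $\phi(a\bullet b)=\phi(a)\bullet b=a\bullet\phi(b)$ forces $\phi(a)\bullet b=0$ and $a\bullet\phi(b)=0$, so $\phi(a)\in C(\A)$, and then $\phi(a\bullet b)=0$.

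With (1) and the two preliminary facts in hand, (2) and (3) fall out cleanly. For (2), since $\phi\circ d\in Der(\A)$ automatically, part (1) gives $\phi\circ d\in CD(\A)\iff\phi\circ d\in\Gamma(\A)$; rewriting $\phi(a)\bullet d(b)=a\bullet(\phi\circ d)(b)$ and $d(a)\bullet\phi(b)=(\phi\circ d)(a)\bullet b$ via $\phi\in\Gamma(\A)$, the two centroid-defining equalities for $d\circ\phi$ become precisely the two equalities expressing $(\phi\circ d)(a)\in C(\A)$, which in turn is equivalent to $\phi\circ d\in\Gamma(\A)$. For (3), since $\phi\circ d$ is always a derivation, $d\circ\phi\in Der(\A)$ iff $[d,\phi]=d\circ\phi-\phi\circ d\in Der(\A)$; combined with $[d,\phi]\in\Gamma(\A)$ and part (1), this is equivalent to $[d,\phi]\in CD(\A)$.

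The main obstacle will be clerical rather than conceptual: the single identity $\phi(x)\bullet y=x\bullet\phi(y)=\phi(x\bullet y)$ has to be applied to several different pairs $(x,y)$ involving $d$, and it would be easy to shuffle compositions, drop a sign in the commutator, or conflate the centroid condition on $d\circ\phi$ with that on $\phi\circ d$. Keeping the two preliminary facts isolated and invoking (1) as a black box in parts (2) and (3) should keep the bookkeeping manageable.
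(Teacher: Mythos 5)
Your proposal is correct, and for parts (2) and (3) it follows a genuinely different and tighter route than the paper. Part (1) is handled the same way in both: comparing the Leibniz identity with the centroid identity forces $\phi(a\bullet b)=\phi(a)\bullet b=a\bullet\phi(b)=0$, and conversely. Where you diverge is in isolating the two structural facts $\Gamma(\A)\circ Der(\A)\subseteq Der(\A)$ and $[Der(\A),\Gamma(\A)]\subseteq\Gamma(\A)$ as standing lemmas (both of which check out: in the first, $\phi$ slides past each Leibniz term; in the second, the mixed terms $\phi(a)\bullet d(b)$ and $d(a)\bullet\phi(b)$ do cancel in the commutator). With these in hand, (3) becomes a two-line reduction --- $d\circ\phi\in Der(\A)$ iff $[d,\phi]\in Der(\A)$ since $\phi\circ d$ is always a derivation and $Der(\A)$ is a subspace, and then $[d,\phi]\in Der(\A)\cap\Gamma(\A)=CD(\A)$ by (1) --- and (2) follows from the identity $(d\circ\phi)(a\bullet b)=(d\circ\phi)(a)\bullet b+a\bullet(\phi\circ d)(b)$ and its mirror, which show that the centroid conditions on $d\circ\phi$ are exactly the statement $(\phi\circ d)(\A)\subseteq C(\A)$. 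The paper instead attempts direct computations for each part; its argument for (2) only establishes that $d\circ\phi\in\Gamma(\A)$ implies $d\circ\phi\in Der(\A)$ and never explicitly returns to the composite $\phi\circ d$ named in the statement, and its argument for (3) is largely asserted. Your version actually proves the equivalences as stated, at the modest cost of the two preliminary computations, and would be the preferable write-up.
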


\begin{proof}
\begin{enumerate}
    \item Suppose $\phi \in Der(\A) \cap \Gamma(\A)$. Then $\phi \in Der(\A)$ and $\phi \in \Gamma(\A)$. Let $a, b \in \A$. From the definition of a derivation, we have:
\begin{align*}
\phi(a \bullet b) &= \phi(a) \bullet b + a \bullet \phi(b).
\end{align*}
From the centroid property, we have:
\begin{align*}
\phi(a \bullet b) &= \phi(a) \bullet b = a \bullet \phi(b).
\end{align*}
Thus,
\begin{align*}
\phi(a \bullet b) &= \phi(a) \bullet b = a \bullet \phi(b) = 0.
\end{align*}
Therefore, $\phi(\A^2) = 0$ and  $\phi(\A) \subseteq C(\A)$. Hence, $\Gamma(\A) \cap Der(\A) \subseteq CD(\A)$.

To prove the reverse inclusion, suppose  $\phi \in C(\A)$. Then:
\begin{align*}
\phi(a \bullet b) &= 0 = \phi(a) \bullet b = a \bullet \phi(b).
\end{align*}
This implies $\phi \in \Gamma(\A) \cap Der(\A)$. Therefore,
\begin{align*}
Der(\A) \cap \Gamma(\A) &= C(\A).
\end{align*}

\item Assume $\phi \in \Gamma(\A)$ and  $D \in Der(\A)$. Let  $a, b \in \A$. If $D \circ \phi \in \Gamma(\A)$, then:

\begin{align*}
(D \circ \phi)(a \bullet b)&= ((D \circ \phi)(a)) \bullet b = a \bullet ((D \circ \phi)(b)).
\end{align*}
On the other hand:
\begin{align*}
(D \circ \phi)(a \bullet b) &= D(\phi(a) \bullet b) = D(a \bullet \phi(b)).
\end{align*}
Using the derivation property,
\begin{align*}
D(\phi(a) \bullet b) &= (D \circ \phi)(a) \bullet b + \phi(a) \bullet D(b),
\end{align*}
and
\begin{align*}
D(a \bullet \phi(b)) &= a \bullet (D \circ \phi)(b) + D(a) \bullet \phi(b).
\end{align*}
Equating these expressions, we find:
\begin{align*}
(D \circ \phi)(a \bullet b) &= (D \circ \phi)(a) \bullet b + a \bullet (D \circ \phi)(b).
\end{align*}
Thus,  $D \circ \phi \in Der(\A)$.

To show the reverse inclusion, suppose $D \circ \phi \in Der(\A)$. Then:
\begin{align*}
(D \circ \phi)(a \bullet b) &= (D \circ \phi)(a) \bullet b + a \bullet (D \circ \phi)(b).
\end{align*}
By substituting the definitions and rearranging, it follows that  $D \circ \phi \in \Gamma(\A)$.

\item Assume  $[D, \phi] \in Der(\A)$. For any $a, b \in \A$,
\begin{align*}
[D, \phi](a \bullet b) &= D(\phi(a \bullet b)) - \phi(D(a \bullet b)).
\end{align*}
Using the derivation properties of $D$ and $\phi$, and simplifying, we find that:
\begin{align*}
[a, [D, \phi](b)] &= 0.
\end{align*}
By the properties of Zinbiel algebras, this implies  $[D, \phi] \in C(\A)$.

Conversely, if $[D, \phi] \in C(\A)$, then:
\begin{align*}
[a, [D, \phi](b)] &= 0.
\end{align*}
This ensures $D \circ \phi \in Der(\A)$, completing the proof.
\end{enumerate}
\end{proof}

\begin{thm}\label{t1}
Let $\A = \A_1 \oplus \A_2$ , where $\A_1$ and  $\A_2$ are ideals of  $\A$. Then,
\begin{align*}
\Gamma(\A) &= \Gamma(\A_1) \oplus \Gamma(\A_2) \oplus C_1 \oplus C_2,
\end{align*}
where  $C_i$ is defined as
\begin{align*}
C_i &= \left\{ \phi \in End(\A_i, \A_j) \mid \phi(\A_i) \subseteq C(\A_j), \phi \A_i = 0 \right\}, \quad 1 \leq i \neq j \leq 2.
\end{align*}
\end{thm}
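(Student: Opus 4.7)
The plan is to decompose any $\phi \in \Gamma(\A)$ into four block components with respect to the direct sum $\A = \A_1 \oplus \A_2$ and then test the centroid identity against every combination of arguments drawn from the two summands. The computation of $C_1$ and $C_2$ will fall out of the mixed cases.

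\textbf{Step 1 (Setup and vanishing cross products).} Since $\A_1$ and $\A_2$ are ideals with $\A_1 \cap \A_2 = 0$, for any $a \in \A_1$, $b \in \A_2$ the product $a \bullet b$ lies in $\A_1 \cap \A_2 = 0$; hence $\A_1 \bullet \A_2 = \A_2 \bullet \A_1 = 0$. Consequently every $\phi \in End(\A)$ admits a unique block decomposition $\phi = \phi_{11} + \phi_{12} + \phi_{21} + \phi_{22}$, where $\phi_{ij} \in End(\A_i,\A_j)$ is extended to $\A$ by zero on $\A_{3-i}$.

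\textbf{Step 2 (Extracting the conditions on each block).} I would substitute each type of pair $(a,b) \in \A_i \times \A_j$ into the centroid identity
\begin{align*}
\phi(a \bullet b) = \phi(a) \bullet b = a \bullet \phi(b)
\end{align*}
and match $\A_1$- and $\A_2$-components. Taking $(a,b) \in \A_1 \times \A_1$ forces $\phi_{11} \in \Gamma(\A_1)$ together with $\phi_{12}(\A_1 \bullet \A_1) = 0$; symmetrically $(a,b) \in \A_2 \times \A_2$ yields $\phi_{22} \in \Gamma(\A_2)$ and $\phi_{21}(\A_2 \bullet \A_2) = 0$. In the mixed cases $a \bullet b = 0$, so the identity reduces to $\phi_{12}(a) \bullet b = 0 = b \bullet \phi_{12}(a)$ for every $b \in \A_2$, and analogously for $\phi_{21}$; this forces $\phi_{12}(\A_1) \subseteq C(\A_2)$ and $\phi_{21}(\A_2) \subseteq C(\A_1)$. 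Thus $\phi_{12} \in C_1$ and $\phi_{21} \in C_2$.

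\textbf{Step 3 (Directness and converse).} The four summands are characterised by pairwise disjoint (domain, codomain) profiles inside $End(\A)$, so the sum is automatically direct. Conversely, any map assembled from a quadruple in $\Gamma(\A_1) \oplus \Gamma(\A_2) \oplus C_1 \oplus C_2$ satisfies the centroid identity by the same four-case verification, using the derived membership conditions together with the vanishing $\A_1 \bullet \A_2 = \A_2 \bullet \A_1 = 0$. Combining both inclusions gives the stated equality.

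\textbf{Main obstacle.} I expect the delicate point to be interpreting the defining condition of $C_i$: the clause ``$\phi \A_i = 0$'' evidently should read $\phi(\A_i \bullet \A_i) = 0$, and one must confirm that this vanishing is genuinely forced by the diagonal centroid case rather than being an additional hypothesis. Once this is pinned down, the remainder of the proof is just careful bookkeeping of $\A_1$- and $\A_2$-components.
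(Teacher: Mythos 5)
Your proposal is correct and follows essentially the same route as the paper: the block decomposition $\phi = \phi_{11}+\phi_{12}+\phi_{21}+\phi_{22}$ is exactly the paper's $\rho_1\phi\rho_1+\rho_1\phi\rho_2+\rho_2\phi\rho_1+\rho_2\phi\rho_2$ via the canonical projections, and the four-case verification of the centroid identity (using $\A_1\bullet\A_2=\A_2\bullet\A_1=0$) matches the paper's argument. Your reading of the clause ``$\phi\A_i=0$'' as $\phi(\A_i\bullet\A_i)=0$ is also the one the paper's proof actually uses, since it derives precisely $\rho_1 f\rho_2(a\bullet b)=0$ for $a,b\in\A_2$.
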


\begin{proof}
Let $\rho_i: \A \to \A_i$ be the canonical projection,  $i = 1, 2$. Then  $\rho_1, \rho_2 \in \Gamma(\A)$  and $\rho_1 + \rho_2 = id_\A$.

For any  $\phi \in \Gamma(\A)$, we have
\begin{align*}
\phi &= (\rho_1 + \rho_2)\phi(\rho_1 + \rho_2) = \rho_1\phi\rho_1 + \rho_1\phi\rho_2 + \rho_2\phi\rho_1 + \rho_2\phi\rho_2.
\end{align*}
Thus,  $\rho_i\phi\rho_j \in \Gamma(\A),  i, j = 1, 2$, and  $\rho_1\Gamma(\A)\rho_1 + \rho_1\Gamma(\A)\rho_2 + \rho_2\Gamma(\A)\rho_1 + \rho_2\Gamma(\A)\rho_2$  is a direct sum.

Now, we define a mapping  $\rho_1\Gamma(\A)\rho_1 \to \Gamma(\A_1)$ such that $ \rho_1 f \rho_1 \mapsto \rho_1 f \rho_1|_{\A_1}$ , for any $f \in \Gamma(\A)$.

If $\rho_1 f \rho_1|_{\A_1} = 0$, then $\rho_1 f \rho_1|_{\A_2} = 0$, which implies  $\rho_1 f \rho_1 = 0$ .Hence, the above mapping is injective.

For any  $\phi \in \Gamma(A_1)$, we can extend  $\phi$ to $\A_2$  such that $\phi|_{\A_2} = 0$. Then the extended  $\phi$ is in  $\Gamma(\A)$ and  $\rho_1 \phi \rho_1|_{\A_1} = \phi$. Thus,  $\rho_1 \Gamma(\A) \rho_1 \cong \Gamma(\A_1)$ as a vector space. Similarly, we obtain  $\rho_2 \Gamma(\A) \rho_2 \cong \Gamma(\A_2)$.

Next, define a mapping  $\rho_1 \Gamma(\A) \rho_2 \to C_2$ such that  $\rho_1 f \rho_2 \mapsto \rho_1 f \rho_2|_{L_2}$.

For $a \in \A_2, b \in \A_1$,
\begin{align*}
\rho_1 f \rho_2(a)b &= \rho_1 f \rho_2(ab) = 0,
\end{align*}
and
\begin{align*}
b(\rho_1 f \rho_2(a)) &= \rho_1 f \rho_2(ba) = 0.
\end{align*}

Thus, $\rho_1 f \rho_2(a) \in C(\A_1)$. For  $a, b \in \A_2, \rho_1 f \rho_2(ab) = 0$ , so $\rho_1 f \rho_2(\A_1) = 0$  and  $\rho_1 f \rho_2|_{\A_2} \in C_2$.

If  $\rho_1 f \rho_2|_{\A_2} = 0$ and  $\rho_1 f \rho_2|_{\A_1} = 0$, then $\rho_1 f \rho_2|_\A = 0$, and the above mapping is injective.

For any  $\phi \in C_2$, extend  $\phi$ to  $\A_1$ and denote it by  $\bar{\phi}$, such that  $\bar{\phi}|_{\A_1} = 0$. Then  $\bar{\phi} \in \Gamma(\A)$ by the following conditions:
\begin{align*}
\bar{\phi}(ab) &= \bar{\phi}(a_1 b_1 + a_2 b_2) = \phi(a_2 b_2) = 0,\\
a \bar{\phi}(b) &= (a_1 + b_2) \bar{\phi}(b_1 + b_2) = a_2 \phi(b_2) = 0,\\
\bar{\phi}(a)b &= \phi(a_2)(b_1 + b_2) = \phi(a_2)b = 0.
\end{align*}

Therefore,  $\rho_1 \bar{\phi} \rho_2(a_2) = \rho_1 \bar{\phi}(a_2) = \phi(a_2)$, and $\rho_1 \phi \rho_2 \mapsto \phi$, which shows that the mapping above is onto.

Thus,  $\rho_1 \Gamma(\A) \rho_2 \cong C_2$ as a vector space. Similarly,  $\rho_2 \Gamma(\A) \rho_1 \cong C_1$.
The theorem is proven.
\end{proof}

\begin{cor}
If  $CD(\A) = 0$, then  $\Gamma(\A)$ is decomposable.
\end{cor}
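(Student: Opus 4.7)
The plan leans on two structural results already available in the excerpt. First, Proposition 2.12(ii) gives $[\Gamma(\A),\Gamma(\A)] \subseteq CD(\A)$, so the hypothesis $CD(\A)=0$ immediately forces $[\Gamma(\A),\Gamma(\A)] = 0$; equivalently, $\Gamma(\A)$ is a commutative associative subalgebra of $\mathrm{End}(\A)$ under composition. Second, Theorem \ref{t2}(1) yields $\mathrm{Der}(\A) \cap \Gamma(\A) = CD(\A) = 0$, so no nonzero element of the centroid is a derivation of $\A$. These two facts together describe $\Gamma(\A)$ as a commutative unital subalgebra of $\mathrm{End}(\A)$ (it contains $\mathrm{id}_\A$) that is disjoint from $\mathrm{Der}(\A)$.

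To transform these observations into the required decomposition, I would exhibit a nontrivial idempotent $e \in \Gamma(\A)$ with $e \neq 0$ and $e \neq \mathrm{id}_\A$; such an $e$ produces at once the splitting
\begin{equation*}
\Gamma(\A) \;=\; e\,\Gamma(\A) \;\oplus\; (\mathrm{id}_\A - e)\,\Gamma(\A)
\end{equation*}
into two proper ideals of the commutative algebra $\Gamma(\A)$. Since the ambient setting of the paper is finite dimensional over $\mathbb{C}$, the centroid $\Gamma(\A)$ is a finite-dimensional commutative $\mathbb{C}$-algebra with unit, and standard Wedderburn-type structure theory then decomposes it as a product of local Artinian $\mathbb{C}$-algebras. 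A nontrivial idempotent is available provided this product has more than one factor.

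The main obstacle, then, is ruling out the case that $\Gamma(\A)$ is itself local (and hence indecomposable despite being commutative). My plan to close this gap is to combine the centroid identity $\phi(a\bullet b) = \phi(a)\bullet b = a\bullet \phi(b)$ with the constraint $\mathrm{Der}(\A)\cap\Gamma(\A)=0$: any nilpotent $\phi$ in the maximal ideal of a local $\Gamma(\A)$ must, by the centroid relations applied iteratively to a Jordan-type decomposition, satisfy the Leibniz rule modulo lower-order terms, ultimately forcing $\phi \in \mathrm{Der}(\A)\cap\Gamma(\A) = 0$ and collapsing $\Gamma(\A)$ to $\mathbb{C}\cdot\mathrm{id}_\A$. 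An alternative, more concrete path is to produce the idempotent $e$ from a decomposition $\A = \A_1 \oplus \A_2$ of $\A$ itself (taking $e$ to be the projector onto $\A_1$), so that decomposability of $\Gamma(\A)$ is inherited via Theorem \ref{t1}; under this route one must first argue that $CD(\A) = 0$ already prevents $\A$ from being indecomposable in the relevant low-dimensional setting, a point that can be verified against the explicit classification tables used later in the paper.
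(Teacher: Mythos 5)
Your proposal is not yet a proof: both of the routes you sketch stop exactly at the point where the real work lies, and the first route is actually self-undermining. If your locality analysis succeeded in showing that a local $\Gamma(\A)$ must collapse to $\mathbb{C}\cdot\mathrm{id}_\A$, you would have exhibited a one-dimensional, hence indecomposable, centroid --- the opposite of what the corollary asserts --- so that branch cannot ``close the gap'' unless you can separately rule out $\Gamma(\A)=\mathbb{C}\cdot\mathrm{id}_\A$ under the hypothesis $CD(\A)=0$, and no mechanism for that is given. The intermediate claim that a nilpotent element of the maximal ideal ``satisfies the Leibniz rule modulo lower-order terms'' and hence lands in $\mathrm{Der}(\A)\cap\Gamma(\A)$ is also unsubstantiated: the centroid identity $\phi(a\bullet b)=\phi(a)\bullet b=a\bullet\phi(b)$ gives no control over $\phi(\A\bullet\A)$ for a general nilpotent $\phi$, and for an indecomposable algebra the centroid is typically local without being one-dimensional, so the obstacle you name is genuine and not removable by these means.

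Your second route is in fact the one the paper takes: its proof opens with ``Let $\A=\A_1\oplus\A_2$, where $\A_1$ and $\A_2$ are ideals,'' applies Theorem \ref{t1} to write $\Gamma(\A)=\Gamma(\A_1)\oplus\Gamma(\A_2)\oplus C_1\oplus C_2$, and then uses $CD(\A)=0$ (via triviality of the centers $C(\A_j)$) to kill $C_1$ and $C_2$, leaving $\Gamma(\A)=\Gamma(\A_1)\oplus\Gamma(\A_2)$. You have correctly identified the missing step in that argument --- nothing in the hypothesis $CD(\A)=0$ is shown to force $\A$ itself to split into a direct sum of proper ideals --- but identifying the gap is not the same as filling it. As written, the statement needs either a proof that $CD(\A)=0$ implies $\A$ is decomposable (or, equivalently in this finite-dimensional setting, that $\Gamma(\A)$ contains a nontrivial idempotent), or a case-by-case verification against the classification tables, which is an empirical check rather than the structural argument your plan promises. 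Until one of these is supplied, the proposal remains an outline of two strategies, each with the same unproven pivot.
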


\begin{proof}
Let $\A = \A_1 \oplus \A_2$, where $\A_1$  and  $\A_2$  are ideals of  $\A$. From Theorem \ref{t1}, the centroid  $\Gamma(\A)$ is given by:
\begin{align*}
\Gamma(\A) &= \Gamma(\A_1) \oplus \Gamma(\A_2) \oplus C_1 \oplus C_2,
\end{align*}
where  $C_i = \{ \phi \in End(\A_i, \A_j) \mid \phi(\A_i) \subseteq C(\A_j), \, \phi \A_i = 0 \}, \, 1 \leq i \neq j \leq 2$.

Suppose $ CD(\A) = 0$, which implies that the center of  $\A$, denoted by $C(\A)$, is trivial. That is,$C(\A) = \{ 0 \}$. Consequently:
\begin{align*}
C(\A_j) &= \{ 0 \} \quad \text{for } j = 1, 2.
\end{align*}

Now, consider the components  $C_1$  and  $C_2$. For  $\phi \in C_1$ , by definition:
\begin{align*}
\phi(\A_1) \subseteq C(\A_2), \quad \phi \A_1 &= 0.
\end{align*}
Since  $C(\A_2) = \{ 0 \}$, it follows that  $\phi(\A_1) = 0$, which implies  $\phi = 0$. Thus, $ C_1 = \{ 0 \}$.

Similarly, for $\phi \in C_2$, we have:
\begin{align*}
\phi(\A_2) \subseteq C(\A_1), \quad \phi \A_2 &= 0.
\end{align*}
Since  $C(\A_1) = \{ 0 \}$, it follows that $\phi(\A_2) = 0$, which implies  $\phi = 0$. Thus,  $C_2 = \{ 0 \}$.

Substituting  $C_1 = C_2 = \{ 0 \}$ into the expression for  $\Gamma(\A)$, we obtain:
\begin{align*}
\Gamma(\A) &= \Gamma(\A_1) \oplus \Gamma(\A_2).
\end{align*}

Hence, $\Gamma(\A)$ is a direct sum of  $\Gamma(\A_1)$ and  $\Gamma(\A_2)$, and therefore  $\Gamma(\A)$ is decomposable.
\end{proof}

\section{ An algorithm for finding central derivations}
Firstly, let  $\{e_1, e_2, \dots, e_n\}$ be a basis of an  $n$-dimensional Zinbiel algebra $\A$. Then
\begin{align*}
e_i \bullet e_j &= \sum_{k=1}^n \gamma_{ij}^k e_k, \quad i, j = 1, 2, \dots, n,
\end{align*}
where the coefficients \( \gamma_{ij}^k \) of the above linear combinations are called the structure constants.

An element $a$  of central derivation, $CD(\A)$, being a linear transformation of the Zinbiel algebra $\A$, is represented in a square matrix form \( [a_{ij}]_{i,j=1,2,\dots,n} \), that is,
\begin{align*}
\phi(e_i) &= \sum_{t=1}^n a_{it} e_t, \quad i = 1, 2, \dots, n.
\end{align*}

According to Theorem \ref{t2}, the central derivation $CD(\A)$ is the intersection between the centroid  $\Gamma(\A)$ and the derivation $Der(\A)$. From the definition of centroid and central derivation, we derive an algorithm to find the central derivation $CD(\A)$, as follows:
\begin{align*}
\sum_{t=1}^n \gamma_{ij}^t a_{tk} &= \sum_{t=1}^n a_{it} \gamma_{tj}^k = \sum_{t=1}^n a_{jt} \gamma_{it}^k = 0, \quad \forall i, j, k = 1, 2, \dots, n.
\end{align*}

This approach can be applied to find the central derivations of complex Zinbiel algebras in dimensions 2, 3, and 4. Additionally, using the classification results from \([5]\) and \([4]\) together with the algorithm mentioned above, we compute the central derivations, and the results are summarized in tabular form.

\begin{thm}
Any $2$-dimensional Zinbiel algebra $\A$ isomorphic to
one of the following nonisomorphic Zinbiel algebras $\A_2^{1}:\;e_1\bullet e_1=e_2$.
\end{thm}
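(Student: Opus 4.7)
The plan is to classify by brute force on structure constants, then reduce modulo basis changes. Fix a basis $\{e_1, e_2\}$ and write
\begin{align*}
e_i \bullet e_j &= \gamma_{ij}^1 e_1 + \gamma_{ij}^2 e_2, \qquad i, j \in \{1,2\}.
\end{align*}
Since the Zinbiel identity is trilinear, it suffices to impose it on the eight triples $(e_i, e_j, e_k)$, each producing two scalar equations in the eight unknowns $\gamma_{ij}^{k}$.

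First I would solve the resulting polynomial system. The triple $(e_1,e_1,e_1)$ alone already gives $(e_1\bullet e_1)\bullet e_1 = 2\,e_1\bullet(e_1\bullet e_1)$, which forces several relations among $\gamma_{11}^{*}$, $\gamma_{12}^{*}$ and $\gamma_{21}^{*}$; the remaining seven triples propagate and sharpen these constraints. After simplification one sees that the $e_1$-components of all products must vanish in any nontrivial solution, leaving $e_1\bullet e_1$ as essentially the only nonzero product, taking values in a one-dimensional subspace that will play the role of $\langle e_2\rangle$.

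The second step is to normalize under the $GL_2(\mathbb{C})$-action. A change of basis $e_i' = \sum_j M_{ij} e_j$ transforms the structure constants by the standard cubic action of $M$, and in the surviving nontrivial case I would exhibit an explicit $M$ that rescales the single nonzero product to $e_1\bullet e_1 = e_2$, giving the canonical form $\A_2^{1}$. The main obstacle I expect is bookkeeping rather than any conceptual difficulty: sixteen scalar equations (many redundant) must be organized so as to make the one-parameter family visible and to confirm that rescaling collapses it to a point. A short invariant-based argument, for instance comparing $\dim \A^2$ (equal to $1$ for $\A_2^{1}$ and to $0$ for the abelian Zinbiel algebra), then separates the two possibilities and confirms that $\A_2^{1}$ is the unique nontrivial isomorphism class in dimension two.
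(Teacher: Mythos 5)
The paper itself offers no proof of this theorem: it is imported verbatim from the classification literature (Omirov's classification of two-dimensional complex Zinbiel algebras, reference [6]), so any argument you give is "different from the paper's" by default. Your brute-force plan — impose the trilinear Zinbiel identity on all eight basis triples, solve the resulting polynomial system in the $\gamma_{ij}^k$, then normalize under the $GL_2(\mathbb{C})$ action — is a legitimate and standard way to establish the result, and in dimension two it is entirely feasible. A more economical route, and the one actually used in the cited source, is to first observe that $\A\bullet\A\neq\A$ (two-dimensional Zinbiel algebras are nilpotent), so $\dim\A^2\le 1$; choosing $e_2$ to span $\A^2$ and $e_1\notin\A^2$ reduces the eight unknowns to four scalars $\alpha,\beta,\gamma,\delta$ with $e_1\bullet e_1=\alpha e_2$, $e_1\bullet e_2=\beta e_2$, $e_2\bullet e_1=\gamma e_2$, $e_2\bullet e_2=\delta e_2$, and the identities on $(e_1,e_2,e_1)$, $(e_2,e_2,e_2)$ and $(e_2,e_1,e_1)$ immediately force $\beta=\delta=\gamma=0$, after which rescaling $e_2$ gives $\A_2^1$. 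What your approach buys is that it does not presuppose nilpotency; what it costs is the bookkeeping you anticipate.

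Two small cautions. First, your intermediate claim that "the $e_1$-components of all products must vanish in any nontrivial solution" is not literally true of the raw solution variety: the basis change $e_2'=e_2-e_1$ turns $\A_2^1$ into $e_1\bullet e_1=e_1+e_2'$, a perfectly good nontrivial solution with nonzero $e_1$-component. The correct statement — which your next clause essentially supplies — is that all products lie in the one-dimensional ideal $\A^2$, and one then chooses $e_2$ to span it. Second, your conclusion that there are exactly two isomorphism classes (the abelian algebra and $\A_2^1$, separated by $\dim\A^2$) is in fact more accurate than the theorem as printed, which lists only $\A_2^1$ and should be read as classifying the non-abelian case.
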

\begin{cor}
The centroid of two dimensional Zinbiel algebras is indecomposable.
\end{cor}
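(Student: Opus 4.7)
The plan is to reduce the indecomposability of $\Gamma(\A_2^1)$ to the indecomposability of $\A_2^1$ itself as a Zinbiel algebra, and then verify the latter by a short classification of its ideals. Indeed, Theorem~\ref{t1} asserts that any nontrivial ideal decomposition $\A = \A_1 \oplus \A_2$ induces a corresponding splitting $\Gamma(\A) = \Gamma(\A_1) \oplus \Gamma(\A_2) \oplus C_1 \oplus C_2$; contrapositively, if $\A_2^1$ admits no nontrivial decomposition into ideals, then the structural hypothesis of Theorem~\ref{t1} fails and the centroid is indecomposable in the paper's sense.

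Accordingly, the first step is to enumerate the one-dimensional ideals of $\A_2^1$. Take a candidate subspace $I = \mathbb{K}(\alpha e_1 + \beta e_2)$. Because all products involving $e_2$ vanish in $\A_2^1$, ideal-stability reduces to checking products against $e_1$. A direct computation yields
\[
(\alpha e_1 + \beta e_2) \bullet e_1 \;=\; e_1 \bullet (\alpha e_1 + \beta e_2) \;=\; \alpha\, e_2,
\]
so closure under multiplication forces $\alpha e_2 \in I$, which is possible only when $\alpha = 0$. Hence $\mathbb{K} e_2$ is the unique proper nontrivial ideal of $\A_2^1$.

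With this in hand, the conclusion is immediate: a nontrivial decomposition $\A_2^1 = I_1 \oplus I_2$ would demand two distinct complementary one-dimensional ideals, but only one such ideal exists. Therefore $\A_2^1$ is indecomposable, and applying Theorem~\ref{t1} yields the indecomposability of $\Gamma(\A_2^1)$.

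The main (and essentially only) obstacle is being systematic in the ideal enumeration and correctly interpreting what ``decomposable centroid'' means in this paper, namely through the splitting afforded by Theorem~\ref{t1}; given the sparse product $e_1 \bullet e_1 = e_2$, no deeper structural input is needed.
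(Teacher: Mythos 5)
Your computation of the ideals of $\A_2^1$ is correct and worth having: since $(\alpha e_1+\beta e_2)\bullet e_1 = e_1\bullet(\alpha e_1+\beta e_2)=\alpha e_2$, any one-dimensional ideal must kill the $e_1$-coefficient, so $\mathbb{K}e_2$ is the unique proper nonzero ideal and $\A_2^1$ is indecomposable as a Zinbiel algebra. The problem is the final step. Theorem~\ref{t1} is an implication in one direction only: \emph{if} $\A$ splits as a direct sum of ideals, \emph{then} $\Gamma(\A)$ splits accordingly. What you invoke is the converse --- ``$\A$ indecomposable $\Rightarrow$ $\Gamma(\A)$ indecomposable'' --- and you mislabel it as the contrapositive. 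The actual contrapositive of Theorem~\ref{t1} runs the other way ($\Gamma(\A)$ fails to split $\Rightarrow$ $\A$ fails to split), so the failure of the hypothesis of Theorem~\ref{t1} tells you nothing about whether $\Gamma(\A_2^1)$, viewed as an algebra in its own right, admits some other decomposition into ideals. Unless one \emph{defines} ``decomposable centroid'' to mean ``arises from a splitting of $\A$ via Theorem~\ref{t1}'' --- a reading the paper never makes explicit --- your argument does not establish the claim, because you never look at $\Gamma(\A_2^1)$ itself.

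The gap is easy to close by direct computation, which is what the argument is missing. Writing $\phi(e_1)=ae_1+be_2$, $\phi(e_2)=ce_1+de_2$ and imposing $\phi(e_1\bullet e_1)=\phi(e_1)\bullet e_1=e_1\bullet\phi(e_1)$ forces $c=0$ and $d=a$, so $\Gamma(\A_2^1)$ is two-dimensional, spanned by $\mathrm{id}$ and the nilpotent map $e_1\mapsto e_2$, $e_2\mapsto 0$; hence $\Gamma(\A_2^1)\cong\mathbb{K}[x]/(x^2)$, a local algebra with no nontrivial idempotents, which is indecomposable. (For comparison, the paper offers no proof of this corollary at all; its implicit criterion elsewhere is to read indecomposability off $CD(\A)\neq 0$, which is likewise only the unproven converse of its Corollary following Theorem~\ref{t1}. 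So your instinct to supply an actual argument is right, but the argument has to engage with $\Gamma(\A_2^1)$ directly rather than with $\A_2^1$ alone.)
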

\[
\text{Table 1: central derivation of two-dimensional complex Zinbiel algebras}
\]
\[
\begin{array}{|c|c|c|}
\hline
\hline
\textbf{Isomorphism Class} & \textbf{Central Derivation} & \textbf{Dimension} \\
\hline
\A_2^{1} &
\left(\begin{array}{cc}
0 & 0 \\
a_{21} & 0
\end{array}\right) & 1 \\
\hline
\end{array}
\]
\begin{thm}
Any $3$-dimensional Zinbiel algebra $\A$ isomorphic to one of following non-isomorphic Zinbiel algebras\\
$\A_3^{1}:\;e_i\bullet e_j=0$\\
$\A_3^{2}:\;e_1\bullet e_1=e_3$\\
$\A_3^{3}:\;e_1\bullet e_1=e_3, e_2\bullet e_2=e_3$\\
$\A_3^{4}:\;e_1\bullet e_2=\frac{1}{2}e_3, e_2\bullet e_1=\frac{-1}{2}e_3$\\
$\A_3^{5}:\;e_2\bullet e_1=e_3$\\
$\A_3^{6}:\;e_1\bullet e_1=e_3,e_1\bullet e_2=e_3,e_2\bullet e_2=\lambda e_3, \quad \lambda  \neq 0 $\\
$\A_3^{7}:\;e_1\bullet e_1=e_2,e_1\bullet e_2=\frac{1}{2}e_3,e_2\bullet e_1=e_3$
\end{thm}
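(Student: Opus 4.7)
The plan is to classify all three-dimensional complex Zinbiel algebras up to isomorphism by fixing an ordered basis $\{e_1,e_2,e_3\}$, writing $e_i \bullet e_j = \sum_k \gamma_{ij}^k e_k$, and treating the $27$ structure constants $\gamma_{ij}^k$ as coordinates on an affine variety cut out by the Zinbiel identity $(a\bullet b)\bullet c = a\bullet(b\bullet c) + a\bullet(c\bullet b)$. Substituting the basis elements into this identity produces a finite system of quadratic polynomial equations in the $\gamma_{ij}^k$. The classification is then the determination of orbits of this variety under the natural $GL_3(\mathbb{C})$-action by change of basis. Throughout, I would exploit the consequence that the symmetrised product $a \star b := a\bullet b + b\bullet a$ endows $\A$ with a commutative associative structure, which in low dimension is automatically nilpotent whenever $\A^2\subsetneq\A$; this sharply restricts the admissible structure constants.

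I would stratify the case analysis by the numerical invariant $\dim \A^2$, where $\A^2 = \A\bullet\A$. If $\A^2 = 0$, the product is identically zero and we obtain $\A_3^1$. If $\dim\A^2 = 1$, choose a basis so that $e_3$ spans $\A^2$; then every product has the form $e_i\bullet e_j = c_{ij} e_3$, and the Zinbiel identity first forces $c_{i3} = c_{3j} = 0$ and then imposes a symmetry constraint on the $2\times 2$ matrix $(c_{ij})_{i,j=1,2}$. The residual freedom of $GL_2(\mathbb{C})$ acting on $\mathrm{span}(e_1,e_2)$ together with the $\mathbb{C}^*$-rescaling of $e_3$ can be used to bring $(c_{ij})$ into canonical form, producing $\A_3^2, \A_3^3, \A_3^4, \A_3^5$ and the one-parameter family $\A_3^6$. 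If $\dim\A^2 = 2$, a similar but more delicate normalization — now using the chain $\A \supsetneq \A^2 \supsetneq \A^3$ to pin down a filtered basis — yields $\A_3^7$; the coefficients $\tfrac12$ and $-\tfrac12$ appearing in $\A_3^4$ and $\A_3^7$ cannot be scaled away once the relation between $e_1\bullet e_1$ and $e_1\bullet e_2$ has been fixed by the Zinbiel identity.

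The main obstacle will be twofold. First, within the case $\dim\A^2 = 1$ one must carefully enumerate the joint orbits of $(c_{ij})$ under $GL_2(\mathbb{C}) \times \mathbb{C}^*$; the family $\A_3^6$ has a surviving modulus $\lambda$, and one must verify that generic values of $\lambda$ parametrize pairwise non-isomorphic algebras, checking by direct calculation that no further change of basis collapses the family. Second, and more substantially, one must certify that the seven algebras in the list are pairwise non-isomorphic by exhibiting a separating invariant for each pair: for instance $\A_3^2$ and $\A_3^5$ are distinguished by whether $e_i\bullet e_i$ vanishes on a chosen generator, $\A_3^3$ and $\A_3^6$ by the $GL_2$-orbit class of the associated symmetric bilinear form $(c_{ij} + c_{ji})$, $\A_3^4$ by being antisymmetric with $\A \star \A = 0$, and $\A_3^7$ by having $\dim\A^2 = 2$ in contrast with the other six. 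Verifying these invariants is routine but unavoidable, and carrying them out completes the proof.
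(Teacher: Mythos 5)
The paper does not actually prove this theorem: it is quoted as a known classification, imported from the literature it cites (Omirov; Adashev--Khudoyberdiyev--Omirov; Alvarez--Fern\'andez--Kaygorodov). Your plan --- stratify by $\dim \A^2$, reduce each stratum to a normal-form problem for the structure constants under the residual $GL$-action --- is indeed the strategy those sources use, so in spirit you are reconstructing the omitted proof rather than diverging from the paper. As a proof, however, your outline has concrete gaps that would have to be closed.

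First, you never exclude the stratum $\dim\A^2=3$. Your stratification silently assumes $\A^2\subsetneq\A$, and your appeal to nilpotency of the symmetrised commutative product is conditioned on exactly that assumption, so it is circular. What is needed is the (nontrivial) theorem that every finite-dimensional complex Zinbiel algebra is nilpotent, which must be invoked or proved before the case analysis can begin. Second, your description of the stratum $\dim\A^2=1$ is inaccurate in a way that matters: writing $\A^2=\mathbb{C}e_3$ and $e_i\bullet e_j=c_{ij}e_3$, the Zinbiel identity does force $c_{i3}=c_{3j}=0$, but once that holds the identity is \emph{vacuously} satisfied and imposes no ``symmetry constraint'' on $(c_{ij})_{i,j=1,2}$ whatsoever --- as it must be, since the list contains the non-symmetric classes $\A_3^5$ and $\A_3^6$ and the antisymmetric class $\A_3^4$. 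The actual content of this stratum is the classification of \emph{arbitrary} nonzero bilinear forms on $\mathbb{C}^2$ up to congruence and scaling, which you do not carry out; in particular the assertion that distinct values of $\lambda$ in $\A_3^6$ give non-isomorphic algebras is exactly the kind of claim that needs an explicit computation with the stabiliser of the antisymmetric part (and one should check whether some identification such as $\lambda\mapsto\lambda'$ survives). Until the orbit enumeration in both the $\dim\A^2=1$ and $\dim\A^2=2$ strata is done explicitly, the proposal is a correct road map but not yet a proof.
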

\begin{cor}
In any three-dimensional Zinbiel algebras, the centroids of $\A_3^3, \A_3^4, \A_3^6$ and $\A_3^7$ are decomposable.
\end{cor}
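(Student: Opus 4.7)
The plan is to treat each of the four algebras $\A_3^3, \A_3^4, \A_3^6, \A_3^7$ individually by computing its centroid explicitly via the algorithm of Section 3 and then exhibiting a non-trivial direct-sum decomposition of the resulting algebra. For each algebra, I would first read off the structure constants $\gamma_{ij}^k$ from its multiplication table, parametrize a general endomorphism $\phi$ by a $3\times 3$ matrix $(a_{ij})$, and substitute into the centroid relations
\[
\phi(e_i\bullet e_j)=\phi(e_i)\bullet e_j = e_i\bullet \phi(e_j), \qquad i,j=1,2,3.
\]
This yields a homogeneous linear system on the nine entries $a_{ij}$. Solving the system, which is routine and well suited to Maple as indicated in Section 3, gives the explicit matrix form of $\Gamma(\A_3^k)$ as a subalgebra of $\mathrm{End}(\A_3^k)$ under composition.

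With $\Gamma(\A_3^k)$ in parametric matrix form, the second step is to exhibit a pair of non-trivial orthogonal idempotents $p_1,p_2\in\Gamma(\A_3^k)$ with $p_1+p_2=\mathrm{id}$, which splits $\Gamma(\A_3^k)$ as a direct sum of proper two-sided ideals $p_1\Gamma(\A_3^k)\oplus p_2\Gamma(\A_3^k)$. In favourable cases the split is visible directly from the parametric form, namely when the free parameters organize into two independent groups acting on complementary invariant subspaces. Whenever the underlying Zinbiel algebra itself decomposes as a direct sum of ideals $\A_3^k=\A'\oplus\A''$, Theorem \ref{t1} provides the shortcut
\[
\Gamma(\A_3^k)=\Gamma(\A')\oplus\Gamma(\A'')\oplus C_1\oplus C_2,
\]
so the task reduces to exhibiting the ideal decomposition of $\A_3^k$ itself; in particular, for $\A_3^3$ a reasonable first attempt is the splitting suggested by the two independent quadratic relations $e_1\bullet e_1=e_3$ and $e_2\bullet e_2=e_3$.

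The hardest part will be the algebras in the list that are themselves indecomposable as Zinbiel algebras, since Theorem \ref{t1} then cannot be invoked directly. For these cases the decomposition of the centroid must be extracted from the explicit matrix form produced in the first step, which requires guessing the correct pair of idempotents and verifying by direct computation that each summand is closed under composition and is a two-sided ideal. I expect $\A_3^4$ to be the most delicate case, because its multiplication entangles $e_1$ and $e_2$ asymmetrically, so a correct pair of idempotents may involve off-diagonal entries in the centroid matrices rather than arising from an obvious subspace splitting of $\A_3^4$ itself.
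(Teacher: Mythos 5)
Your proposal is a plan rather than a proof: no centroid is actually computed, no idempotents are exhibited, and the argument defers all of the real work ("guessing the correct pair of idempotents and verifying by direct computation") to steps that are never carried out. Worse, the one concrete suggestion you do make fails: for $\A_3^3$ the relations $e_1\bullet e_1=e_3$ and $e_2\bullet e_2=e_3$ do \emph{not} suggest a splitting into ideals, because any ideal containing $e_2$ must contain $e_2\bullet e_2=e_3$, and likewise for $e_1$, so two complementary ideals would both have to contain $e_3$. None of the four algebras $\A_3^3,\A_3^4,\A_3^6,\A_3^7$ admits an obvious direct-sum decomposition, so Theorem \ref{t1} gives you no shortcut anywhere, and the entire burden falls on the unexecuted idempotent hunt inside $\Gamma(\A_3^k)$.

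You are also missing the key lemma the paper relies on, which makes the statement almost immediate: the earlier corollary asserting that $CD(\A)=0$ implies $\Gamma(\A)$ is decomposable. The paper's route is to run the algorithm of Section 3 (the linear system $\sum_t \gamma_{ij}^t a_{tk}=\sum_t a_{it}\gamma_{tj}^k=\sum_t a_{jt}\gamma_{it}^k=0$) on each three-dimensional algebra, record in Table 2 that the central derivation algebra is zero precisely for $\A_3^3,\A_3^4,\A_3^6,\A_3^7$, and then invoke that corollary. This replaces your case-by-case search for orthogonal idempotents in an explicitly parametrized centroid with a single uniform criterion; if you want to salvage your approach, you should either actually produce the centroid matrices and the idempotents for all four algebras, or reroute through the $CD(\A)=0$ criterion as the paper does.
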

\[
\text{Table 2: central derivation of three-dimensional complex Zinbiel algebras}
\]
\[
\begin{array}{|c|c|c|}
\hline
\hline
\textbf{Isomorphism Class} & \textbf{Central Derivation} & \textbf{Dimension} \\
\hline
\A_3^{1}
&\left(\begin{array}{ccc}
a_{11}&a_{12}&a_{13}\\
a_{21}&a_{22}&a_{23}\\
a_{31}&a_{32}&a_{33}
\end{array}\right) & 9 \\
\hline
\A_3^{2}
&\left(\begin{array}{ccc}
0&0&0\\
a_{21}&a_{22}&a_{23}\\
a_{31}&0&0
\end{array}\right) & 4 \\
\hline
\A_3^{3}
&\left(\begin{array}{ccc}
0&0&0\\
0&0&0\\
0&0&0
\end{array}\right) & 0 \\
\hline
\A_3^{4}
&\left(\begin{array}{ccc}
0&0&0\\
0&0&0\\
0&0&0
\end{array}\right) & 0 \\
\hline
\A_3^{5}
&\left(\begin{array}{ccc}
0&0&0\\
a_{21}&a_{22}&a_{23}\\
0&a_{32}&0
\end{array}\right) & 4 \\
\hline
\A_3^{6}
&\left(\begin{array}{ccc}
0&0&0\\
0&0&0\\
0&0&0
\end{array}\right) & 0 \\
\hline
\A_3^{7}
&\left(\begin{array}{ccc}
0&0&0\\
0&0&0\\
0&0&0
\end{array}\right) & 0 \\
\hline
\end{array}
\]
\begin{thm}
Any $4$-dimensional Zinbiel algebra $\A$ isomorphic to one of following non-isomorphic Zinbiel algebras\\
$\A_4^1:\;e_1\bullet e_1=e_2, e_1\bullet e_2=e_3,e_2\bullet e_1=2e_3,e_1\bullet e_3=e_4, e_2\bullet e_2=3e_4, e_3\bullet e_1=3e_4 $\\
$\A_4^{2}:\;e_1\bullet e_1=e_3, e_1\bullet e_2=e_4,e_1\bullet e_3=e_4,e_3\bullet e_1=2e_4$\\
$\A_4^3:\;e_1\bullet e_1=e_3, e_1\bullet e_3=e_4,e_2\bullet e_2=e_4,e_3\bullet e_1=2e_4$\\
$\A_4^{4}:\;e_1\bullet e_2=e_3, e_1\bullet e_3=e_4,e_2\bullet e_1=-e_3$\\
$\A_4^5:\;e_1\bullet e_2=e_3, e_1\bullet e_3=e_4,e_2\bullet e_1=-e_3,e_2\bullet e_2=e_4$\\
$\A_4^{6}:\;e_1\bullet e_1=e_4, e_1\bullet e_2=e_3,e_2\bullet e_1=-e_3,e_2\bullet e_2=-2e_3+e_4$\\
$\A_4^7:\;e_1\bullet e_2=e_3, e_2\bullet e_1=e_4,e_2\bullet e_2=-e_3$\\
$\A_4^{8}:\;e_1\bullet e_1=e_3, e_1\bullet e_2=e_4,e_2\bullet e_1=-\alpha e_3,e_2\bullet e_2=-e_4$\\
$\A_4^9:\;e_1\bullet e_1=e_4, e_1\bullet e_2=\alpha e_4,e_2\bullet e_1=-\alpha e_4,e_2\bullet e_2=e_4,e_3\bullet e_3=e_4$\\
$\A_4^{10}:\;e_1\bullet e_1=e_4, e_1\bullet e_3=e_4,e_2\bullet e_1=-e_4,e_2\bullet e_2=e_4,e_3\bullet e_1=e_4$\\
$\A_4^{11}:\;e_1\bullet e_1=e_4, e_1\bullet e_2=e_4,e_2\bullet e_1=-e_4,e_3\bullet e_3=e_4$\\
$\A_4^{12}:\;e_1\bullet e_2=e_3, e_2\bullet e_1=e_4$\\
$\A_4^{13}:\;e_1\bullet e_2=e_3, e_2e_1=e_4$\\
$\A_4^{14}:\;e_1\bullet e_2=e_3, e_2\bullet e_1=e_4$\\
$\A_4^{15}:\;e_1\bullet e_2=e_3, e_2\bullet e_1=e_4$\\
$\A_4^{16}:\;e_1\bullet e_2=e_3, e_2\bullet e_1=e_4$
\end{thm}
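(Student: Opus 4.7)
The goal is to exhibit a complete, non-redundant list of $4$-dimensional complex Zinbiel algebras up to isomorphism. My approach is the standard structure-constant method: fix a basis $\{e_1,e_2,e_3,e_4\}$ of $\A$, write the generic product $e_i\bullet e_j=\sum_{k=1}^{4}\gamma_{ij}^{k}e_k$ (so $64$ unknown scalars), and impose the Zinbiel identity $(e_i\bullet e_j)\bullet e_k = e_i\bullet(e_j\bullet e_k)+e_i\bullet(e_k\bullet e_j)$ for each ordered triple $(i,j,k)$. This yields a system of quadratic polynomial equations in the $\gamma_{ij}^{k}$, and the classification amounts to describing the orbits of the resulting affine variety under the change-of-basis action of $GL_4(\C)$.

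To make this tractable, I would stratify the variety by discrete invariants before attacking the algebra. The most useful invariants are: the dimensions of the terms $\A^{k}$ of the descending sequence $\A^1\supseteq \A^2\supseteq \cdots$ introduced earlier; the dimension of the center $C(\A)$ and of $\mathrm{Ann}_r(\A)$ and $\mathrm{Ann}_\ell(\A)$; and whether the commutator-type element $e_i\bullet e_j+e_j\bullet e_i$ vanishes. Each fixed tuple of these invariants carves out a locally closed stratum on which I can choose adapted bases (for example, a basis extending a basis of $\A^2$, and a cyclic generator when $\A$ is one-generated). Within a stratum the Zinbiel relations become linear in many of the remaining unknowns, and the residual $GL_4$-action simplifies to a manageable subgroup acting by scaling and by a handful of triangular substitutions that I can exploit to normalise the surviving parameters.

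The main steps, in order, would be: (i) separate the one-generated (cyclic) algebras from those needing two or more generators, since cyclic Zinbiel algebras admit a particularly rigid form and account for several of the families $\A_4^{1},\A_4^{2},\ldots$; (ii) within each generator count, split further according to $\dim \A^{2}$ and $\dim \A^{3}$, which distinguishes the nilpotent families of different nilindex such as $\A_4^{1}$ (nilindex $4$) from $\A_4^{3}$ (nilindex $3$) and from $\A_4^{9}$–$\A_4^{16}$ (nilindex $2$); (iii) for each such stratum, solve the Zinbiel relations and then use the stabiliser of the chosen flag inside $GL_4$ to reduce parameters to either fixed scalars or to the parametric families indexed by $\alpha,\lambda,\ldots$ appearing in the list; (iv) finally, verify non-isomorphism across the list by comparing the invariants above and, where two strata agree on all of them, by a direct check that no $GL_4$-element maps one structure to another.

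The primary obstacle is the combinatorial explosion in step (iii): with $64$ constants and $64$ Zinbiel relations, the naive case analysis is infeasible by hand. My plan is to delegate the bulk of the polynomial manipulation and stabiliser calculation to a symbolic engine (Maple, in line with what the authors use elsewhere in the paper) and to use the invariants from step (ii) as a guard against both missed cases and duplicated isomorphism classes. A secondary difficulty is the parametric families $\A_4^{8}$ and $\A_4^{9}$, where I must determine the precise equivalence relation on $\alpha$ induced by change of basis; this I would settle by computing the orbit of a generic $\alpha$ under the residual torus action and confirming that distinct representatives in the final list correspond to genuinely distinct orbits.
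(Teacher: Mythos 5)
There is a genuine gap here: what you have written is a strategy outline, not a proof. Every substantive step is deferred — the Zinbiel relations are never solved on any stratum, the stabiliser reductions are never carried out, the normalisation of the parameters $\alpha$ and $\lambda$ is never computed, and the pairwise non-isomorphism of the sixteen classes is never verified. Exhaustiveness, which is the hardest part of any such classification (showing that no isomorphism class has been missed), is addressed only by the remark that the invariants serve as "a guard against missed cases," which is not an argument. Delegating the computation to Maple is legitimate as a methodology, but a proof must at least record the output of that computation in checkable form; as it stands, nothing in your text could be verified or falsified. For comparison, the paper itself does not prove this theorem either: it imports the classification of low-dimensional complex Zinbiel algebras from the existing literature (the references to Omirov, to Adashev--Khudoyberdiyev--Omirov, and to Alvarez--Fern\'andez--Kaygorodov), so the expected justification is a citation rather than a from-scratch derivation. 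Your plan, if executed, would constitute a genuinely independent route, but in its current form it establishes nothing.

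A secondary point that any real execution of your plan would force you to confront: the list as printed cannot be correct as stated, since $\A_4^{12}$ through $\A_4^{16}$ are given identical multiplication tables ($e_1\bullet e_2=e_3$, $e_2\bullet e_1=e_4$) and would therefore be isomorphic, contradicting the claim that the classes are non-isomorphic; the case split on $\alpha$ for $\A_4^{15}$ in Table~3 confirms that parameters have been dropped from the theorem statement. Your step (iv) — checking non-isomorphism across the list — would detect this immediately, which is a point in favour of your approach, but you would then need to recover the intended parametric families from the source classification before the theorem could be proved at all.
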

\begin{cor}
In any four-dimensional Zinbiel algebras, the centroids of $\A_4^1, \A_4^3, \A_4^5,\A_4^9, \A_4^{10}, \A_4^{11}$ and $\A_4^{16}$ are decomposable.
\end{cor}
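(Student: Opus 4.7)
The plan is to derive each of the seven claimed decomposability statements from the preceding corollary by showing that the central derivation algebra is trivial. That is, for each of $\A_4^1, \A_4^3, \A_4^5, \A_4^9, \A_4^{10}, \A_4^{11}, \A_4^{16}$, I would verify $CD(\A) = 0$ by means of the algorithm of Section 3, and then invoke the corollary ($CD(\A) = 0 \Rightarrow \Gamma(\A)$ decomposable) to close each case.

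For each algebra, I would fix the basis $\{e_1,e_2,e_3,e_4\}$, read off the non-zero structure constants $\gamma_{ij}^k$ from its multiplication table, and represent a candidate central derivation as a $4\times 4$ matrix $[a_{ij}]$ via $\phi(e_i)=\sum_{t} a_{it}e_t$. Substituting into the central-derivation algorithm
\begin{align*}
\sum_{t=1}^{4}\gamma_{ij}^{t}\,a_{tk} \;=\; \sum_{t=1}^{4} a_{it}\,\gamma_{tj}^{k} \;=\; \sum_{t=1}^{4} a_{jt}\,\gamma_{it}^{k} \;=\; 0, \qquad \forall\, i,j,k \in \{1,2,3,4\},
\end{align*}
yields a homogeneous linear system in the sixteen unknowns $a_{ij}$, and the goal is to show that in each of these seven cases only the trivial solution remains.

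Procedurally I would begin with the sparsest tables ($\A_4^{16}$, $\A_4^{11}$, $\A_4^{3}$), where very few $\gamma_{ij}^k$ are non-zero and a small subset of the equations already forces all $a_{ij}=0$; then move to $\A_4^{5}$ and $\A_4^{10}$; and finally to the most entangled case $\A_4^{1}$, whose chain of products $e_1\bullet e_1=e_2$, $e_1\bullet e_2=e_3$, $e_1\bullet e_3=e_4$ propagates many coupled constraints through the three families of equations. As indicated in the introduction, symbolic computation (Maple) handles the purely mechanical linear-algebra step, so the substantive task is the careful tabulation of structure constants and the correct assembly of the system.

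The main obstacle is the parametric algebra $\A_4^{9}$, whose products depend on the scalar $\alpha$: one must confirm that $CD(\A_4^{9})=0$ holds \emph{uniformly} in $\alpha$ and identify whether any exceptional value of $\alpha$ degenerates the rank of the system. A secondary subtlety is the bookkeeping of the three families of equations above, which jointly encode both the centroid condition $\phi(a\bullet b)=\phi(a)\bullet b = a\bullet \phi(b)$ and the vanishing condition $\phi(\A\bullet\A)=0$; once all three families together are shown to force $[a_{ij}]=0$ in each of the listed algebras, the preceding corollary applies and the claimed decomposability of $\Gamma(\A_4^1),\Gamma(\A_4^3),\Gamma(\A_4^5),\Gamma(\A_4^9),\Gamma(\A_4^{10}),\Gamma(\A_4^{11}),\Gamma(\A_4^{16})$ follows.
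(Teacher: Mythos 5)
Your proposal matches the paper's (implicit) argument exactly: the paper justifies this corollary by the entries of Table~3, which show $CD(\A)=0$ precisely for $\A_4^1,\A_4^3,\A_4^5,\A_4^9,\A_4^{10},\A_4^{11},\A_4^{16}$ (computed via the Section~3 algorithm, with the parametric case $\A_4^9$ handled uniformly in $\alpha$), combined with the earlier corollary that $CD(\A)=0$ forces $\Gamma(\A)$ to be decomposable. Your attention to the $\alpha$-dependence of $\A_4^9$ is a reasonable extra precaution, but otherwise this is essentially the same route.
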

\[
\text{Table 3: central derivation of four-dimensional complex Zinbiel algebras}
\]
\[
\begin{array}{|c|c|c|}
\hline
\hline
\textbf{Isomorphism Class} & \textbf{Central Derivation} & \textbf{Dimension} \\
\hline
\A_4^{1}
&\left(\begin{array}{cccc}
0&0&0&0\\
0&0&0&0\\
0&0&0&0\\
0&0&0&0
\end{array}\right) & 0 \\
\hline
\A_4^{2}
&\left(\begin{array}{cccc}
0&0&0&0\\
-a_{31}&0&-2a_{41}&0\\
a_{31}&0&2a_{41}&0\\
a_{41}&0&0&0
\end{array}\right) & 2 \\
\hline
\A_4^{3}
&\left(\begin{array}{cccc}
0&0&0&0\\
0&0&0&0\\
0&0&0&0\\
0&0&0&0
\end{array}\right) & 0 \\
\hline
\A_4^{4}
&\left(\begin{array}{cccc}
0&0&0&0\\
0&0&0&0\\
0&0&0&0\\
a_{41}&0&0&0
\end{array}\right) & 1 \\
\hline
\A_4^{5}
&\left(\begin{array}{cccc}
0&0&0&0\\
0&0&0&0\\
0&0&0&0\\
0&0&0&0
\end{array}\right) & 0 \\
\hline
\A_4^{6}
&\left(\begin{array}{cccc}
0&0&0&0\\
0&0&0&0\\
-a_{42}&2a_{42}+a_{41}&0&0\\
a_{41}&a_{42}&0&0
\end{array}\right) & 2 \\
\hline
\A_4^{7}
&\left(\begin{array}{cccc}
0&0&0&0\\
0&0&0&0\\
-a_{32}&a_{32}&0&0\\
0&a_{42}&0&0
\end{array}\right) & 2 \\
\hline
\end{array}
\]
\[
\begin{array}{|c|c|c|}
\hline
\hline
\textbf{Isomorphism Class} & \textbf{Central Derivation} & \textbf{Dimension} \\
\hline
\A_4^{8}
&\left(\begin{array}{cccc}
0&0&0&0\\
0&0&0&0\\
a_{31}&\alpha a_{31}&0&0\\
-a_{42}&a_{42}&0&0
\end{array}\right)\; (\alpha \neq 0)
& 2 \\
&\left(\begin{array}{cccc}
0&0&0&0\\
0&0&0&0\\
a_{31}&0&0&0\\
-a_{42}&a_{42}&0&0
\end{array}\right)\; (\alpha = 0)
& 2 \\
\hline
\A_4^{9}
&\left(\begin{array}{cccc}
0&0&0&0\\
0&0&0&0\\
0&0&0&0\\
0&0&0&0
\end{array}\right) & 0 \\
\hline
\A_4^{10}
&\left(\begin{array}{cccc}
0&0&0&0\\
0&0&0&0\\
0&0&0&0\\
0&0&0&0
\end{array}\right) & 0 \\
\hline
\A_4^{11}
&\left(\begin{array}{cccc}
0&0&0&0\\
0&0&0&0\\
0&0&0&0\\
0&0&0&0
\end{array}\right) & 0 \\
\hline
\A_4^{12}
&\left(\begin{array}{cccc}
0&0&0&0\\
0&0&0&0\\
a_{31}&0&0&0\\
0&a_{42}&0&0
\end{array}\right) & 2 \\
\hline
\A_4^{13}
&\left(\begin{array}{cccc}
0&0&0&0\\
0&0&0&0\\
0&a_{41}&0&0\\
a_{41}&a_{42}&0&0
\end{array}\right) & 2 \\
\hline
\A_4^{14}
&\left(\begin{array}{cccc}
0&0&0&0\\
0&0&0&0\\
0&a_{32}&0&0\\
0&a_{42}&0&0
\end{array}\right) & 2 \\
\hline
\A_4^{15}
&\left(\begin{array}{cccc}
0&0&0&0\\
0&0&0&0\\
0&a_{32}&0&0\\
0&a_{42}&0&0
\end{array}\right)\; (\alpha \neq -1)
& 2 \\
&\left(\begin{array}{cccc}
a_{11}&a_{12}&a_{13}&a_{14}\\
0&0&0&0\\
-a_{41}&a_{32}&-a_{43}&-a_{44}\\
a_{41}&a_{42}&a_{43}&a_{44}
\end{array}\right)\; (\alpha = -1)
& 9 \\
\hline
\A_4^{16}
&\left(\begin{array}{cccc}
0&0&0&0\\
0&0&0&0\\
0&0&0&0\\
0&0&0&0
\end{array}\right) & 0 \\
\hline
\end{array}
\]

\begin{cor}
\begin{enumerate}
    \item The dimensions of the central derivation of the complex two-dimensional
Zinbiel algebras are one.
\item The dimensions of the central derivation of complex three-dimensional Zinbiel algebras
vary between zero and nine.
\item The dimensions of the central derivation of four-dimensional complex Zinbiel algebras
vary between zero and nine.\cite{a,b,c,d,e,f,g,h,i,j,k,l,m,n,o,p,q,r,s,t}
\end{enumerate}
\end{cor}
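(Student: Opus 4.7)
The plan is to establish all three statements by a direct, case-by-case inspection of the classification spelled out in the three theorems above, combined with the algorithm from Section~3. Since the earlier isomorphism-invariance theorem shows that $\dim CD(\A)$ depends only on the isomorphism class, it suffices to analyse one representative of each class.

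For each representative, with structure constants $\gamma_{ij}^k$ read off the multiplication table, I would write an unknown endomorphism as a matrix $(a_{ij})$ and impose the simultaneous linear conditions
\[
\sum_{t=1}^n \gamma_{ij}^t a_{tk} \;=\; \sum_{t=1}^n a_{it}\gamma_{tj}^k \;=\; \sum_{t=1}^n a_{jt}\gamma_{it}^k \;=\; 0
\]
for all $i,j,k$. These give a homogeneous linear system in the $n^2$ entries $a_{ij}$ whose solution space is, by definition, $CD(\A)$; the dimension is then obtained by counting free parameters in the row-reduced form. The outcome of these calculations is precisely what is displayed in Tables~1--3.

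With the tables in hand, the three assertions follow by inspection. For (1), $\A_2^{1}$ is the only two-dimensional class and Table~1 gives $\dim CD(\A_2^{1})=1$. For (2), Table~2 shows that the maximum value $9$ is attained by the abelian algebra $\A_3^{1}$, where every $\gamma_{ij}^k$ vanishes so that the conditions above are vacuous and all nine matrix entries are free, while the value $0$ is attained by $\A_3^{3}$ (among others). For (3), Table~3 shows that the minimum $0$ is realised by several classes such as $\A_4^{1}$, $\A_4^{3}$, $\A_4^{5}$, and that the maximum $9$ is realised by the specialisation $\alpha=-1$ of $\A_4^{15}$, whose matrix contains nine free parameters.

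The main obstacle is the sheer bookkeeping in dimension four, where sixteen isomorphism classes must be processed and the parametric families $\A_4^{8}$ and $\A_4^{15}$ branch into subcases according to the value of $\alpha$, producing a jump in the dimension of $CD$ at the exceptional value. To keep the case analysis tractable and to avoid arithmetical errors, I would delegate the construction and row reduction of the linear systems to a symbolic algebra package (as already indicated in the introduction, Maple), so that the only substantive task is to isolate correctly the specialisations of the parameters at which the solution space changes dimension.
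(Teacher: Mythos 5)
Your proposal follows essentially the same route as the paper: the corollary is obtained by applying the linear-system algorithm of Section~3 to each representative in the classification (with Maple handling the row reduction), recording the results in Tables~1--3, and then reading off the minimum and maximum dimensions by inspection, including the parametric branching for $\A_4^{8}$ and $\A_4^{15}$. This is correct and matches the paper's (largely implicit) argument.
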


\section{Conflicts of Interest}
The authors declare no conflicts of interest.

\end{document}